\pgfplotsset{width=7cm,compat=1.5.1}
\renewcommand{\paragraph}[1]{$\empty$\\\textit{#1}.\hspace{0.5em}}
\numberwithin{equation}{section}
\newenvironment{itempuce}
{\begin{itemize}[
	label=$-$, 
	leftmargin=*, 
]}{\end{itemize}}
\newenvironment{itemnum}[1]{
\begin{enumerate}[
	label=$(#1)$, 
	leftmargin=*,
	parsep=0cm, itemsep=0cm,
	topsep=0cm]}
{\end{enumerate}}
\newcommand{\rom}{\roman*}
\newtheorem{thm}{Theorem}[section]
\newtheorem{lem}[thm]{Lemma}
\newtheorem{cor}[thm]{Corollary}
\newcommand{\fa}{\forall \,}
\renewcommand{\and}{\qquad\mathrm{and}\qquad}
\renewcommand{\leq}{\,\leqslant\,}
\renewcommand{\geq}{\,\geqslant\,}
\newcommand{\<}{\,<\,}
\renewcommand{\>}{\,>\,}
\renewcommand{\=}{\,=\,}
\newcommand{\Llra}{\Longleftrightarrow}
\newcommand{\lra}{\longrightarrow}
\newcommand{\xra}[2]{\;\;\xrightarrow[#2]{#1}\;\;}
\newcommand{\lms}{\longmapsto}
\newcommand{\ps}[1]{\left\langle #1 \right\rangle}
\newcommand{\event}[1]{\left\{#1\right\}}
\newcommand{\vabs}[1]{\left|#1\right|}
\newcommand{\parent}[1]{\left(#1\right)}
\newcommand{\intervalle}[1]{\left[#1\right]}
\newcommand{\norme}[2]{\left\|#1\right\|_{#2}}
\newcommand{\dist}{\dist}
\newcommand{\mtext}[1]{\mbox{\rm #1}}
\newcommand{\textm}[1]{\;\; \mbox{\rm #1}}
\newenvironment{acc}{\left\{\begin{tabular}{ll}}{\end{tabular}\right.}
\renewcommand{\d}{\mathrm{d}}
\newcommand{\Hess}{\mathrm{Hess}}
\newcommand{\C}{\mathrm{C}}
\newcommand{\D}{\mathrm{D}}
\newcommand{\E}{\mathrm{E}}
\newcommand{\G}{\mathrm{G}}
\newcommand{\J}{\mathrm{J}}
\renewcommand{\L}{\mathrm{L}}
\renewcommand{\O}{\mathrm{O}}
\renewcommand{\P}{\mathrm{P}}
\renewcommand{\S}{\mathrm{S}}
\newcommand{\T}{\mathrm{T}}
\newcommand{\U}{\mathrm{U}}
\newcommand{\V}{\mathrm{V}}
\newcommand{\1}{\mathds{1}}
\newcommand{\EE}{\mathbb{E}}
\newcommand{\RR}{\mathbb{R}}
\newcommand{\tmix}{t_{\text{mix}}}
\newcommand{\Law}{\mtext{Law}}
\renewcommand{\dist}{\mtext{dist}}
\newcommand{\Kullback}{\mtext{Kullback}}
\newcommand{\Wasserstein}{\mtext{Wasserstein}}
\newcommand{\Ric}{\mtext{Ric}}
\newcommand{\Var}{\mtext{Var}}
\title{On the cutoff phenomenon for Dyson--Jacobi processes}
\author{Samuel Chan-ashing}
\address{
DMA, École normale supérieure, Université PSL, CNRS, 75005 Paris, France \newline
CEREMADE, Université Paris-Dauphine, PSL, CNRS \newline
CMAP, Inria, CNRS, École polytechnique, Institut Polytechnique de Paris, 91120 Palaiseau, France}
\email{samuel.chan-ashing@ens.psl.eu}
\date{\today}
\keywords{Dyson--Jacobi processes; 
	Markov diffusion processes;
	Interacting Particle System,
	Random Matrix Theory;
	Cutoff phenomenon;
	Spectral analysis;
	Intrinsic Wasserstein distance;
	Curvature-dimension inequality;
	}
\subjclass[2000]{60J60 (Diffusion processes); 82C22 (Interacting particle systems)}
\begin{document}

\maketitle

\begin{abstract}
We study the convergence to equilibrium of the Dyson--Jacobi process, a system of $n$ interacting particles on the segment $[0,1]$ arising from Random Matrix Theory. We establish the occurence of a cutoff phenomenon for the intrinsic Wasserstein distance and provide an explicit formula for the associated mixing time.

Our approach relies on the interplay between the Riemannian geometry of the process and a flattened Euclidean representation obtained via a diffeomorphic deformation. This transformation allows us to transfer curvature-dimension inequalities from the Euclidean setting to the original space, thereby yielding sharp quantitative estimates.
\end{abstract}

\tableofcontents


\section{Introduction and main results}

The \emph{cutoff phenomenon} describes an abrupt transition in the convergence to equilibrium of a parameter-indexed family of ergodic diffusions. It reflects a competition between intrinsic relaxation toward equilibrium and dependence on the initial condition, with the parameter determining the regime in which this competition is resolved. When cutoff occurs, the distance to equilibrium remains close to its maximal value until a sharply defined time, after which it rapidly collapses to near zero.
This notion was introduced by David Aldous and Persi Diaconis in the 1980s in their study of random walks on finite sets; see for instance \cite{AD86, Dia96}. 
Guan-Yu Chen and Laurent Saloff--Coste showed in \cite{CSC08} that the \emph{product condition}, introduced by Yuval Peres, is sufficient to guarantee cutoff in $\L^p$ distance for $p>1$. The case $p=1$, corresponding to total variation distance, also holds for diffusions with positive curvature and was recently established by Justin Salez in \cite{Sal25}. These general methods, however, do not provide sharp estimates of the mixing time. In this direction, Djalil Chafaï and Max Fathi proposed in \cite{CF25} a precise analysis of the cutoff phenomenon for certain overdamped Langevin diffusions with positive curvature in Euclidean space.

This work continues the program of obtaining sharp mixing time estimates for Dyson diffusions, a family of interacting particle systems arising in random matrix theory. We follow the line of investigation initiated for the Dyson--Ornstein--Uhlenbeck process by Jeanne Boursier, Djalil Chafaï and Cyril Labbé in \cite{BCL}, and for the Dyson--Laguerre process in \cite{Cha25}. We focus here on the Dyson--Jacobi processes associated with the $\beta$-Jacobi ensembles. These dynamics were studied notably by Yan Doumerc, Nizar Demni and Ezéchiel Kahn; see \cite{Dou05, Dem10, Kah21}. In the non-interacting case ($\beta = 0$), the cutoff phenomenon and the mixing time were completely resolved as early as 1994 by Saloff--Coste in \cite[Section 4.1]{SC94}.

Our approach combines diffeomorphic deformation results for diffusions, which allow us to work in a Euclidean setting whenever necessary. More precisely, we observe that curvature-dimension inequality, distances and divergences, and related functional inequalities are invariant under such deformations. In particular, in the Dyson--Jacobi case, this strategy allows us to reduce the dynamics to a Langevin-type diffusion for which curvature-dimension conditions can be established, together with the consequences that follow from them. We can apply the results  obtained in \cite[Appendix B]{Cha25}, which yields the cutoff phenomenon and sharp mixing time estimates for the intrinsic Wasserstein distance.

\subsection{The Dyson--Jacobi process}

In what follows, we study the \textit{Dyson--Jacobi process} (DJ), defined as the solution \(X^n\) to the stochastic differential equation (SDE)
\begin{equation}
\label{eq:DJ}
\d X_{t}^{i,n} \= \sqrt{2 X_{t}^{i,n}(1- X_{t}^{i,n})} \, \d B^i_t \;+ \parent{b_n - (a_n+b_n) X_{t}^{i,n}}\d t + \; \frac{\beta}{2} \sum_{j \neq i}^n\frac{ H(X_{t}^{i,n}, X_{t}^{j,n})}{ X_{t}^{i,n}- X_{t}^{j,n}}\d t,
\end{equation}
where $H(x_i,x_j) = x_i(1- x_j)+ x_j(1- x_i)$ and the parameters satisfy $n\geq 1$, $\beta\>0$ and $a_n, b_n\geq0$.

\paragraph{Existence and uniqueness}
The system may exhibit blow up phenomena, either because the diffusion coefficients degenerate at the boundaries, where they may vanish, or due to collisions between particles, which repel each other through a Coulomb-type interaction. To avoid such pathologies, we restrict attention to a parameter regime that guarantees well-posedness of the dynamics.

A necessary condition for the existence of a global solution to \eqref{eq:DJ} is $a_n \wedge b_n \> \frac{2}{\beta} (n-1)$ for $\beta > 0$, see for instance \cite[Remark 2.2]{Kah21}. Under this condition on the parameters $a_n$ and $b_n$, the existence and uniqueness of strong solutions for general $\beta$ are covered by \cite{GM13} for the case $\beta \geq 1$, and completed by \cite{GM14, Dem10, Kah21} for the case $0 < \beta < 1$. In the non-interactive case $\beta=0$, the dynamics reduce to $n$ independent particles following the real Jacobi SDE studied in \cite[Lemma 5.1]{Dou05}.

More precisely, \cite[Corollary 9]{GM13} states that the system \eqref{eq:DJ} admits a unique global strong solution on $[0,+\infty)$ with no particle collisions provided that the ordering condition satisfies the ordering constraint $0 \leq x_0^{1,n} \< \hdots \< x_0^{n,n} \leq 1$ a.s., 
and that
\begin{equation}\label{eq:exun}
\beta \geq 1 \and a_n \wedge b_n \> \frac{2}{\beta} (n-1).
\end{equation}
Although cutoff is expected to hold under broader assumptions, we will adopt this standing condition throughout. Moreover, this parameter regime, together with the ordered initial condition, ensures that the system remains almost surely inside the convex domain of ordered nonnegative coordinates
\begin{equation}
X_t^n \in \D_n \;:=\; \event{x \in \RR^n :\; 0 \leq x_1 \< \cdots \< x_n \leq 1}.
\end{equation}

\paragraph{Notations}
Observe that if one fixes an initial condition $x_0^n \in \D_n$ for each $n \geq 1$, then the sequence given by $(x_0^{n,n})_n$ is strictly increasing and bounded above by $1$. It therefore converges to a strictly positive limit, which we denote throughout by $\bar{x}_0 \in (0,1]$. In particular, we then have the convergence $\frac{1}{n} \sum_{i=1}^{n} x_{0}^{i,n} \xra{}{n \to \infty} \bar{x}_0$. We denote by $\D$ the set of strictly increasing sequences with values in $(0,1)$. For any $x_0 \in \D$, we write $x_0^n$ for the first $n$ terms of this sequence.

\paragraph{Invariant measure}
This structural connection with matrix models suggests viewing the Dyson--Jacobi process as the dynamical analogue of the Jacobi ensemble in random matrix theory.
This link is made through the invariant measure associated to the particle system, which corresponds to the $\beta$-Jacobi ensemble \cite{KN04, For10}. Recall that these $\beta$-ensembles generalize the classical matrix cases $\beta = 1$, $\beta = 2$ and $\beta = 4$ corresponding respectively to the JOE, JUE and JSE of \cite{For10}.
 
More precisely, according to \cite[Proposition 2.6]{Kah21}, the Dyson--Jacobi process \eqref{eq:DJ} admits a unique stationary probability measure $\pi_{\beta}^n$, whose density with respect to the Lebesgue measure is given by the multivariate Beta distribution
\begin{equation}\label{eq:gc}
\d \pi_\beta^n(x) \;:=\; \frac{\1_{{(x_1,\ldots,x_n)\in\overline{\D}_n}} }{C_n^\beta} \prod_{i=1}^n x_i^{{b_n- \frac{\beta}{2} (n-1)}-1} (1-x_i)^{{a_n - \frac{\beta}{2} (n-1)}-1} \prod_{i > j}(x_i-x_j)^{\beta} \, \d x_i,
\end{equation}
where $C_n^\beta$ is a normalizing constant.
The invariant measure is the Gibbs measure associated with the energy, defined for $x\in\D_n$ by
\begin{equation}
\E(x) = -\sum_{i=1}^n \parent{c_b \ln {x_i} + c_a \ln (1 - x_i) + \frac{\beta}{4} \sum_{j < i} \ln\vabs{x_i-x_j}}.
\end{equation}
with $c_b = b_n - \frac{\beta}{2} (n-1)-1$ and $c_a = a_n - \frac{\beta}{2} (n-1)-1$.
The Coulomb gas $\pi_{\beta}^{n}$ is log-concave on its natural domain whenever the energy is convex, which holds under the following additional condition
\begin{equation}\label{eq:exconv}
a_n \wedge b_n \> \frac{\beta}{2} (n-1) + 1.
\end{equation}

\paragraph{The intrinsic Wasserstein distance} \label{sec:wgdl}
The diffusion coefficient of the Dyson--Jacobi process has components $\sigma : x_i \lms \sqrt{2}/\sqrt{x_i(1-x_i)}$, and the corresponding carré du champ operator is given in \eqref{eq:cdc}.
In this context, we consider the diagonal metric $g$ defined by $g_{ii}(x) \= 1/{x_i(1-x_i)}$  and $g_{ij}(x) \= 0$ if $i\neq j$. This corresponds to the diagonal setting of \cite[Appendix B]{Cha25}, with the function $a : x \lms 1/\sqrt{x_i(1-x_i)}$ on $(0,1)$. Since an antiderivative of $a$ is $A : x \lms 2 \arcsin \sqrt{x}$, the associated Riemannian distance for the Dyson--Jacobi process is
\begin{equation}
\fa x,y \in (\RR_+^\ast)^n \;:\quad d_{g}(x,y) \;:=\; 4 \sqrt{\sum_{i=1}^n \parent{\arcsin\sqrt{x_i}-\arcsin\sqrt{y_i}}^2}.
\end{equation}
Moreover, the geodesic $\gamma$ connecting $x$ to $y$ is given by $\gamma_i(t) \= 4 \parent{t \arcsin\sqrt{y_i} + (1-t)\arcsin\sqrt{x_i}}^2$ for $t\in[0,1]$.
The intrinsic Wasserstein distance of order $r\geq0$ is then defined, for probability measures $\mu$ and $\nu$ on the same space, by
\begin{equation}
\Wasserstein_{g,r}(\mu, \nu) \= 4 \parent{\inf_{( X, Y)} \EE\intervalle{\parent{\sum_{i=1}^n \vabs{\arcsin\sqrt{ X_i}-\arcsin\sqrt{ Y_i}}^2}^{r/2}}}^{1/r},
\end{equation}
where the infimum runs over all couplings $(X, Y)$ of $\mu$ and $\nu$. We simply write $\Wasserstein$ in the case $r=2$.

\subsection{Main results}

The mixing time associated with the Wasserstein distance is defined by 
\[\tmix^n(\varepsilon) \;:=\; \inf \event{t \geq 0 \;:\; \Wasserstein \parent{\Law(X_t^n) \mid \pi_{\beta}^{n}} \leq \varepsilon}.\]
Our main result establishes that the cutoff phenomenon occurs for the Dyson--Jacobi particle and provides an explicit expression for the mixing time.

\begin{thm}[Universal cutoff]\label{thm:CDJ}
Let $x_0$ be in $\D$. 
Let ${(X^n_t)}_{t\geq0}$ be the Dyson--Jacobi process \eqref{eq:DJ} started at $x_0^n$.
Then a cutoff phenomenon occurs for $\Wasserstein$ at critical time $c_n$, namely for all $\varepsilon \in (0,1)$,
\[\lim_{n\to\infty} 
\Wasserstein(\Law( X^n_{t_n})\mid \pi_{\beta}^{n}) \=\begin{cases}
	+\infty 
	& \text{if $t_n=(1-\varepsilon)c_n$},\\
	0 & \text{if $t_n=(1+\varepsilon)c_n$},
\end{cases}\]
assuming that the following lower bound tends to infinity as $n\to\infty$,
\[\frac{1}{a_n+b_n} \log \parent{\sqrt{n \frac{a_n+b_n}{b_n}} \vabs{\bar{x}_{0} - \frac{b_n}{a_n + b_n}} } \leq c_n \leq \frac{\log n \vee \log\vabs{\bar{x}_{0} + \frac{b_n}{a_n+b_n}}}{a_n+b_n-\beta(n-1)}.\]
\end{thm}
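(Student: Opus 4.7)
The plan is to transfer the Dyson--Jacobi dynamics to a flat Euclidean setting through the componentwise diffeomorphism $\Phi : x \in \D_n \lms \parent{2\arcsin\sqrt{x_1},\ldots,2\arcsin\sqrt{x_n}}$, which is precisely the antiderivative of the one-dimensional diffusion coefficient $a(t) = 1/\sqrt{t(1-t)}$ applied componentwise. Applying It\^o's formula to $Y_t^n := \Phi(X_t^n)$ shows that $(Y_t^n)$ solves a Langevin-type SDE $dY_t^n = \sqrt{2}\,dB_t - \nabla U_n(Y_t^n)\,dt$ on the convex open set $\Phi(\D_n) \subset (0,\pi)^n$, whose invariant measure is the pushforward of $\pi_\beta^n$ by $\Phi$. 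Crucially, the pullback of the Euclidean metric by $\Phi$ is precisely the Riemannian metric $g$ on $\D_n$, so $\Wasserstein(\Law(X^n_t)\mid\pi_\beta^n)$ coincides, up to an absolute constant, with the Euclidean Wasserstein distance between $\Law(Y^n_t)$ and the deformed equilibrium.

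The next step is to establish a uniform lower bound on the Bakry--\'Emery curvature of $(Y_t^n)$, namely $\Hess U_n \succeq \rho_n \,\id$ on $\Phi(\D_n)$ with $\rho_n$ of order $a_n+b_n-\beta(n-1)$. After the change of variables, the boundary confinement $-c_b\log x_i - c_a \log(1-x_i)$ yields a diagonal Hessian contribution of size at least $a_n+b_n$ in each $y_i$-direction, while the logarithmic Coulomb repulsion contributes a correction matrix of operator norm at most $\beta(n-1)$. Checking this bound is the main obstacle: one must carefully track what the singular interaction $\log\vabs{x_i-x_j}$ becomes under $\Phi$ via the trigonometric identity $\sin^2(y/2) - \sin^2(y'/2) = \sin\parent{(y+y')/2}\sin\parent{(y-y')/2}$, and extract a clean matrix-level lower bound. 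The standing regime \eqref{eq:exconv} is precisely what makes $\rho_n$ strictly positive.

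With the curvature bound at hand, I would then invoke the abstract framework of \cite[Appendix B]{Cha25} applied to $(Y_t^n)$. The upper half of the cutoff follows from the Bakry--\'Emery contraction $\Wasserstein(\Law(X_t^n)\mid\pi_\beta^n) \leq e^{-\rho_n t}\,\Wasserstein(\Law(X_0^n)\mid\pi_\beta^n)$ together with an a priori control of the initial Wasserstein distance by a quantity essentially proportional to $\sqrt{n(a_n+b_n)/b_n}\,\vabs{\bar{x}_0 - b_n/(a_n+b_n)}$, the extra factor coming from the local metric weight $1/\sqrt{x(1-x)}$ evaluated at the equilibrium mean. For the lower half, I take the linear statistic $\ell(x) := \frac{1}{n}\sum_i x_i$: taking expectations in \eqref{eq:DJ}, the antisymmetric interaction term cancels, so $\EE[\ell(X_t^n)]$ relaxes to $b_n/(a_n+b_n)$ at the sharp exponential rate $a_n+b_n$. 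After suitably renormalising $\ell$ to be $1$-Lipschitz with respect to $g$, Kantorovich duality yields a lower bound on $\Wasserstein(\Law(X_{t_n}^n)\mid\pi_\beta^n)$ that diverges for $t_n = (1-\varepsilon)c_n$. The gap between the two rates $a_n+b_n-\beta(n-1)$ (curvature-driven contraction) and $a_n+b_n$ (linear-statistic decay) is exactly what produces the width of the admissible window for $c_n$ in the statement.
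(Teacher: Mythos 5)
Your overall architecture (flattening via $y_i = 2\arcsin\sqrt{x_i}$, curvature-dimension bound for the transformed Langevin diffusion, Bakry--\'Emery contraction for the upper bound) is exactly the paper's. For the lower bound you take a genuinely different route: Kantorovich duality with the linear statistic $\phi_n(x)=\sum_i x_i$, whereas the paper lower-bounds the \emph{total variation} distance by Wilson's method (\cite[Lemma 2.2]{SC94}, using the eigenfunction $\varphi_n$ and a Duhamel variance estimate) and then transfers this to $\Wasserstein$ through the regularization inequality $\norme{\mu_{t+\eta}^{x} - \pi_\beta^n}{\T\V}^2 \leq \frac{1}{2\eta}\Wasserstein^2(\mu_t^x,\pi_\beta^n)$ combined with Pinsker. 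Your duality argument is simpler and does prove divergence, but it is quantitatively weaker: since $\Gamma^{\D\J}(\phi_n)=\sum_i x_i(1-x_i)\leq n/4$, the best you get is $\Wasserstein \gtrsim \sqrt{n}\,\vabs{\bar{x}_0-\tfrac{b_n}{\lambda_n}}\,e^{-\lambda_n t}$, i.e.\ a lower bound $\frac{1}{\lambda_n}\log\bigl(\sqrt{n}\,\vabs{\bar{x}_0-\tfrac{b_n}{\lambda_n}}\bigr)$ on $c_n$, missing the factor $\sqrt{\lambda_n/b_n}$ inside the logarithm that the paper obtains from the normalization $\norme{\varphi_n}{2}^2 = nb_n/\lambda_n$. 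That factor is precisely what is needed for the refined regime $a_n\sim n^\alpha b_n$ in Corollary \ref{cor:CDJ}, so your method does not recover the stated lower bound.

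There is also a genuine error in your upper-bound step: the initial distance $\Wasserstein(\delta_{x_0^n},\pi_\beta^n)$ is \emph{not} proportional to $\vabs{\bar{x}_0 - b_n/(a_n+b_n)}$ --- it cannot vanish when $\bar{x}_0$ equals the equilibrium mean, since a Dirac mass is being compared to a continuous measure. The correct a priori control, as in the paper, is $\Wasserstein^2(\delta_{x_0^n},\pi_\beta^n)\leq \frac{\pi^2}{4}\parent{\phi_n(x_0^n)+\frac{nb_n}{\lambda_n}} = O(n)$, obtained from $\arcsin\sqrt{t}\leq\frac{\pi}{2}\sqrt{t}$; you have transplanted the lower-bound quantity into the upper bound, which would wrongly predict instantaneous mixing from the centered initial condition. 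Finally, a smaller point on the curvature computation: the Coulomb part of $\Hess V$ in the flat coordinates is positive semidefinite (it contributes terms of the form $\frac{\beta}{2}\frac{1}{\sin^2}\parent{(\partial_i f)\pm(\partial_j f)}^2\geq 0$ and is simply discarded), so the loss $\beta(n-1)$ in the curvature constant does not come from bounding the operator norm of an interaction correction; it is already built into the confinement exponents $C_a = 2a_n-\beta(n-1)-1$ and $C_b = 2b_n-\beta(n-1)-1$ of the transformed potential.
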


\noindent One can refine this theorem to cover non-atomic initial conditions.
\begin{cor}\label{cor:CDJ}
Let ${(X^n_t)}_{t\geq0}$ be the Dyson--Jacobi process \eqref{eq:DJ} under an additional assumption on the coefficients
\[\frac{a_n}{b_n} \xra{}{n\to\infty} r \in [0, +\infty].\]
Then a cutoff phenomenon occurs for $\Wasserstein$ at critical time $c_n$, namely for all $\varepsilon \in (0,1)$,
\[\lim_{n\to\infty} \sup_{x_0^n \in \S^n_0}
\Wasserstein(\Law( X^n_{t_n})\mid \pi_{\beta}^{n}) \=\begin{cases}
	+\infty 
	& \text{if $t_n=(1-\varepsilon)c_n$},\\
	0 & \text{if $t_n=(1+\varepsilon)c_n$},
\end{cases}\]
where the set of initial conditions, and the critical time are the following
\begin{itempuce}
	\item for $r=+\infty$: take $0 \< \alpha \leq 1$ if $a_n \sim n^{\alpha} b_n$, or take $\alpha \= 0$ if $a_n \sim \varepsilon_n b_n$ with $\varepsilon_n = o(n)$, \[\S_0^n \= \D_n \and \frac{1+\alpha}{2}\frac{\log n}{\lambda_n} \leq c_n \leq \frac{\log n}{\lambda_n}.\]
	\item for $r=0$: for any $\varepsilon \> 0$, \[\S_0^n \= \event{x_0^n \in \D_n \;:\; \vabs{\bar{x}_0 - 1} \> \varepsilon} \and \frac{\log n}{2 \lambda_n} \leq c_n \leq \frac{\log n}{\lambda_n}.\]
	\item for $0 \< r \< +\infty$: for any $\varepsilon \> 0$ and $m := \frac{1}{r+1}$, if $\lambda_n \sim \alpha \rho_n$ with $\alpha \in (0, \frac{1}{2})$, \[\S_0^n \= \event{x_0^n \in \D_n \;:\; \vabs{\bar{x}_0 - m} \> \varepsilon} \and \frac{\log n}{2 \lambda_n} \leq c_n \leq \frac{\log n}{2\alpha \lambda_n}.\]
\end{itempuce}
In the other cases, the bounds obtained do not provide sufficiently sharp control of the mixing time.
\end{cor}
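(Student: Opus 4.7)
The plan is to upgrade Theorem~\ref{thm:CDJ}, which is pointwise in $x_0 \in \D$, to a version uniform over the classes $\S_0^n$. The initial condition enters the two-sided estimates of Theorem~\ref{thm:CDJ} only through $\bar{x}_0$, and more precisely through the distance $\vabs{\bar{x}_0 - m_n}$ to the equilibrium mean $m_n := b_n/(a_n+b_n)$. For cutoff to persist uniformly, one needs this distance to be bounded above and bounded away from $0$ uniformly over $\S_0^n$; since $\bar{x}_0 \in (0,1]$ the upper control is automatic, so the problem reduces to arranging a uniform lower bound.

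First, I would split into the three regimes according to $r := \lim a_n/b_n$. Since $m_n$ converges respectively to $0$, $1$, and $m := 1/(r+1)$, the natural admissible classes are $\S_0^n = \D_n$, $\event{x_0^n \in \D_n : \vabs{\bar{x}_0 - 1} > \varepsilon}$, and $\event{x_0^n \in \D_n : \vabs{\bar{x}_0 - m} > \varepsilon}$; in each, $\vabs{\bar{x}_0 - m_n}$ is eventually bounded below uniformly. Substituting these ranges into the bounds of Theorem~\ref{thm:CDJ}, with $\rho_n$ the drift scale $a_n+b_n$ and $\lambda_n$ the curvature scale $a_n+b_n-\beta(n-1)$, the logarithm of $\vabs{\bar{x}_0 - m_n}$ becomes an $O(1)$ term that can be absorbed, while $\sqrt{n\rho_n/b_n}$ evaluates to $n^{(1+\alpha)/2}$ for $a_n \sim n^\alpha b_n$ (with $\alpha = 0$ in the slowly-growing boundary case $a_n \sim \varepsilon_n b_n$, $\varepsilon_n = o(n)$), and to order $\sqrt{n}$ in the cases $r=0$ and $r \in (0,+\infty)$.

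Matching the resulting lower bound against the upper bound $\log n/\lambda_n$ produces the intervals for $c_n$ announced in the corollary. In the middle regime, the additional assumption $\lambda_n \sim \alpha \rho_n$ with $\alpha \in (0, \tfrac{1}{2})$ is precisely what makes the lower bound $\log n/(2\lambda_n)$ strictly smaller than the upper bound $\log n/(2\alpha\lambda_n)$, and without it the two bounds collapse or invert, which is the reason for the closing remark about non-sharpness in other cases.

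The main obstacle is the uniformity step itself. The upper estimate on $\Wasserstein$ past $c_n$ follows from the Bakry--Émery curvature-dimension inequality transferred through the diffeomorphic deformation of Section~\ref{sec:wgdl}: this gives exponential contraction of $\Wasserstein$ at rate $\lambda_n$, with a constant that is independent of the initial condition, so the upper half of cutoff is automatically uniform in $x_0$. The lower estimate before $c_n$ is more delicate: one must exhibit a witness functional with controlled continuous dependence on $\bar{x}_0$. The natural candidate is the empirical mean $\frac{1}{n}\sum_i X_t^{i,n}$, whose expectation reverts toward $m_n$ at rate $\rho_n$ under the dynamics~\eqref{eq:DJ}, yielding a lower bound for $\Wasserstein$ of order $e^{-\rho_n t}\vabs{\bar{x}_0 - m_n}$. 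Combined with the uniform lower control of $\vabs{\bar{x}_0 - m_n}$ in each class $\S_0^n$, this produces the required uniform slow-mixing bound and completes the extension of Theorem~\ref{thm:CDJ}.
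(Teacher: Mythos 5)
Your proposal is correct and follows essentially the same route as the paper: one re-examines the two-sided bounds of Theorem~\ref{thm:CDJ}, substitutes the asymptotics of $a_n/b_n$ in each regime to compare the lower bound $c_n^-$ with the upper bound $c_n^+$, and secures uniformity over $\S_0^n$ by keeping $\vabs{\bar{x}_0 - b_n/(a_n+b_n)}$ bounded away from zero (via the affine eigenfunction, i.e.\ the centered empirical mean). Only be aware that you have interchanged the paper's names for the two rates --- in the paper $\lambda_n = a_n+b_n$ is the spectral gap governing the lower bound and $2\rho_n = a_n+b_n-\beta(n-1)-1$ is the curvature governing the Wasserstein contraction --- though your argument is internally consistent.
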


\paragraph{Cutoff for other distances or divergences}
Let $\mu$ and $\nu$ be two probability measures on the same space. Their total variation distance is
\[\norme{\mu-\nu}{\T\V} := \sup_A\vabs{\mu(A)-\nu(A)} \in [0,1].\]
The relative entropy (Kullback--Leibler divergence), for $\mu \ll \nu$, is defined by
\[\Kullback(\mu \mid \nu) := \int \frac{\d \mu}{\d \nu} \log \parent{\frac{\d \mu}{\d \nu}} \d \nu \= \int \log \parent{\frac{\d \mu}{\d \nu}} \d \mu \in [0,+\infty],\]
with the convention $\Kullback(\nu \mid \mu) = +\infty$ if $\nu$ is not absolutely continuous with respect to $\mu$.
One observes in \cite{Cha25} that the intrinsic Wasserstein structure provides the appropriate geometric framework for deriving the functional inequalities that relate these distances and divergences. 
In this setting, we make use of the inequality from \cite{PS25}, which allows one to obtain an upper bound on the $\L^2$ distance in terms of total variation distance, and thus to compare the $\L^2$ distance and total variation distance.
For the Fisher distance, cutoff can also be established, in the same spirit as in \cite{CF25}. Note, however, that this distance is intrinsically defined with respect to the underlying process, so the cutoff result should be interpreted as holding for an \textit{intrinsic Fisher distance}. Other distances and divergences, such as the Hellinger distance, the \( \L^p \) distances for \( p > 1 \), or the chi-squared (\(\chi^2\)) distance, can be compared either to the total variation (see \cite{BCL}) or to the \( \L^2 \) distance via interpolation.

\paragraph{Matrix cases}
The Dyson--Jacobi process originates from the matrix Jacobi process introduced in \cite{Dou05} for the real case \( \beta = 1 \) and for the complex case \( \beta = 2 \). The real (resp. complex) matrix Jacobi process is defined as the radial component of left corner of an orthogonal (resp. unitary) Brownian motion. More precisely, let $n \geq 1$ and $a_n$, $b_n$ be integers such that $b_n \geq n+1$ and $a_n \geq n+1$. 
Set $m := a_n+b_n$ and let $\Theta$ be an $m\times m$ orthogonal Brownian motion (see, e.g., \cite{Lev17}). Consider the process $(M_t)_{t \geq 0}$ obtained by  taking the submatrix of $\Theta_t$ formed by the first $n$ rows and $b_n$ columns.
The covariance matrix $M_t^\dag M_t$, where $\dag$ denotes the conjugate transpose, takes values in the cone of positive semidefinite matrices.
A remarkable result from \cite[Theorem 1]{Dou05} states that the eigenvalues of $ M_t^\dag M_t$ evolves as a Dyson--Jacobi particle system \eqref{eq:DJ} with parameter $\beta=1$.
Thus, the Dyson--Jacobi SDEs provide a natural extension of these matrix-derived eigenvalue diffusions, simultaneously encompassing the real $(\beta=1)$ and complex $(\beta=2)$ settings.

\paragraph{Links with the other beta-ensembles}
The Jacobi ensemble occupies a central position in the theory of beta-ensembles. Indeed, the Beta distribution is the most general case in this family, from which the Gamma and Normal distributions arise simply as limiting forms obtained through translation and dilation.


From a dynamical perspective, as discussed in \cite[Section 2.7.4]{BGL14}, Jacobi processes interpolate between the classical beta-ensembles through appropriate scaling limits. Both the Ornstein--Uhlenbeck and Laguerre generators appear as degenerate limits of Jacobi operators. Regarding the convergence to the Ornstein--Uhlenbeck operator, consider the symmetric case with parameters $a_n = b_n = n/2$. If one translates the support $[0,1]$ to the symmetric interval $[-1,1]$ and rescales the spatial variable $x$ by a factor $1/\sqrt{n}$, the rescaled Jacobi operator $\frac{1}{n} \G^{\D\J}$ converges to the Ornstein--Uhlenbeck operator $\G^{\O\U}$ as $n \to \infty$. This convergence holds at the level of invariant measures, eigenvalues, eigenfunctions, and the associated Markov semigroups.

Conversely, to recover the Laguerre operator, one may zoom in on the edge of the spectrum. By mapping $[0,1]$ to $[0,2]$, rescaling the variable $x$ to $x/n$, and sending $a_n \to \infty$ (typically setting $a_n = n$) while keeping $b_n$ fixed, the operator $\frac{1}{n} \G^{\D\J}$ converges to the Laguerre operator $\G^{\D\L}$. This limit reflects the classical convergence of Beta distributions towards Gamma distributions.

Historically, in the context of orthogonal polynomials, these asymptotic relationships were already noted by Ervin Feldheim \cite{Fel42}. From a Markovian perspective, Bakry established that the Laguerre and Hermite (or Ornstein--Uhlenbeck) families appear as properly renormalized limits of the Jacobi semigroups. This hierarchy identifies the Jacobi family as the generic model for such univariate diffusion operators.

\paragraph{Organization of the paper}
The Dyson--Jacobi process admits multiple interpretations, and throughout this paper, we adopt the two complementary viewpoints, switching between the Riemannian and the Euclidean formulations depending on which is more convenient.
We do not rely exclusively on one framework or the other; instead, we exploit both perspectives jointly.
More precisely, the key idea is that many structural properties of the process can be transferred via a diffeomorphic transformation of the particle system. The Euclidean viewpoint arises from such a diffeomorphism, under which the process becomes a Langevin diffusion with a constant diffusion coefficient. We develop this perspective in Section \ref{sec:ddd}, and in particular in Lemma \ref{lem:ddd}, which shows that several structural properties, such as curvature-dimension inequalities and spectral properties, are preserved under this transformation. This strategy does not apply universally: for instance, log-concavity is not generally preserved under arbitrary changes of variables.
However, in the present setting, the transformation is well-behaved, and log-concavity is indeed preserved. This yields a Euclidean framework in which the curvature-dimension inequality can be established, and within which cutoff is obtained uniformly across a wide class of distances. As a consequence, we derive an explicit mixing time in Section \ref{sec:proofs}.

\subsection{Discussions with related works about the cutoff}\label{ss:krc}

In this subsection, we collect results from the literature that allow us to infer the existence of a cutoff phenomenon for the Dyson--Jacobi process.

\paragraph{Cutoff $\L^p$ with $p>1$}
From the spectral analysis carried out in Section \ref{ss:updj}, one can deduce cutoff in $\L^p$ distances with $p>1$ using \cite[Corollary 3.4]{CSC08}. More precisely, to ensure the occurrence of a cutoff in $\L^p$-distance, it suffies that the initial condition $x_0^n$ satisfies
\begin{equation}\label{eq:iclp}
|\varphi_{n}(x_0^{n})|^2 \,\gg\, \frac{n b_n}{b_n + a_n},
\end{equation}
where $\varphi_{n}$ denotes the eigenfunction \eqref{eq:efsp} associated with the eigenvalue $-(a_n+b_n)$ of the generator $\G_{\D\J}$ defined in \eqref{eq:IG}.
Therefore, choosing $x_0^n$ as in \eqref{eq:iclp}, we deduce that the Dyson--Jacobi process exhibits a cutoff in $\L^2(\pi_{\beta}^{n})$ norm for the class of initial conditions \(\S_0^n = \{\delta_{x_0^n}\}\). We emphasize that this argument does not yield an explicit expression for the mixing time, which remains the main objective of the present work.

\paragraph{Cutoff for TV (for $p=1$)}
The previous result does not apply directly to the case \( p = 1 \). Nevertheless, cutoff is still expected to hold for in total variation for diffusions. A recent result in \cite{Sal25} shows that the product condition is also sufficient to guarantee cutoff in total variation, provided the diffusion has nonnegative curvature. This condition is satisfied by the Dyson--Jacobi process \eqref{eq:DJ}; see Lemma \ref{lem:cd}.
To verify the product condition, one may rely on \cite[Lemma 2.2]{SC94}
.

\section{Diffeomorphic deformations of diffusions}\label{sec:ddd}

When dealing with a non-constant diffusion coefficient, as in \eqref{eq:IG}, two conceptual approaches naturally arise. One may work directly within the intrinsic geometry of the process. Alternatively, one may adopt a Euclidean viewpoint by performing a change of variables that flattens the diffusion coefficient, thereby absorbing its inhomogeneity and recovering a Euclidean structure. This section is devoted to the deformation of a Markov diffusion with diagonal diffusion via a diffeomorphism, and to the study of which structural properties are preserved under this transformation. The arguments presented here apply to a broader class of diffusion processes.
We recall the necessary background from \cite[Appendix B]{Cha25}.

\paragraph{Diagonal diffusions}
We refer to a diffusion as \emph{diagonal} when the diffusion coefficient has a product structure. Each diffusion coefficient then depends only on the position of the corresponding particle. In the absence of interaction, which is present only through the drift term, the system consists of $n$ independent and identically distributed ergodic particles.

Let $(X_t^n)_{t\ge 0}$ be such a diagonal diffusion, that is, a Markov diffusion process on $\mathbb{R}^n$ solving the stochastic differential equation
\begin{equation}\label{eq:dp}
    \d X_t^{i,n} \= \frac{\sqrt{2}}{a(X_t^{i,n})} \d B_t^i + b_i(X_t^{n}) \d t, \qquad 1 \leq i \leq n,
\end{equation}
where $(B_t)_{t\ge 0}$ is a standard Brownian motion on $\mathbb{R}^n$, $a$ is a smooth, strictly positive function whose inverse $a^{-1}$ is Lipschitz, and each $b_i$ is a $C^2$ vector field that is Lipschitz. Let $A$ be an antiderivative of $a$. The diffusion operator associated with this process is given by
\begin{equation}\label{eq:ddol}
    L_A \= \sum_{i=1}^{n} \frac{\partial^2_{ii}}{a(x_i)^2} + \sum_{i=1}^{n} b^i(x) \partial_i.
\end{equation}
This structure endows the state space with the geometry of a smooth $n$-dimensional manifold, equipped with the Riemannian metric
\begin{equation}\label{eq:mdc}
g_{ij}(x) \= \begin{acc}
	$a(x_i)^2$, & if $i=j$,\\
	$0$, & if $i \neq j$.
\end{acc}
\end{equation}
Noting that $g^{ii}(x) \= a(x_i)^{-2}$, the generator $L_A$ is thus a diffusion operator on this state space.
Moreover, for any smooth function $f \in C^\infty((0,1)^n)$, the associated carré du champ operator is
\begin{equation}
\Gamma_A(f)(x) \= \sum_{i=1}^{n} \frac{(\partial_i f(x))^2}{a(x_i)^2} \= \vabs{\nabla_g f}_g^2.
\end{equation}

\paragraph{Flattening: Euclidean diagonal diffusion}
Since $a$ is strictly positive, its antiderivative $A$ is strictly increasing and defines a $C^{2}$-diffeomorphism onto its image. For each $1 \leq i \leq n$, define $A_i : x \in \RR^n \lms A(x_i)$, and set $A^{n} := (A_1, \cdots, A_n)$ and $A^{-n} := (A_1^{-1}, \cdots, A_n^{-1})$. By Itô's formula, the transformed diffusion process
\begin{equation}
(Y_t^n)_{t\ge 0} = (A^{n}(X_t^n))_{t\ge 0},
\end{equation}
with initial condition $Y_0^n = A^{n}(X_0^n)$, satisfies the stochastic differential equation
\begin{equation}\label{eq:dp}
    \d Y_t^{i,n} \= \sqrt{2}\, \d B_t^i + \parent{a(X_t^{i,n}) \, b_i (X_t^{n}) + \frac{a' (X_t^{i,n})}{a^2(X_t^{i,n})}}  \d t.
\end{equation}
The diffusion operator associated with this Euclidean process is given by
\begin{equation}\label{eq:ddol}
	L \= \sum_{i=1}^{n} \partial^2_{ii} + \sum_{i=1}^{n} \parent{(a \circ A^{-1})(b^i \circ A^{-n}) + \frac{a' \circ A^{-1}}{a^2 \circ A^{-1}}} \partial_i.
\end{equation}
Similarly, for any smooth function $f \in C^\infty$, the corresponding carré du champ operator is
\begin{equation}
\Gamma(f)(x) \= \vabs{\nabla f}^2.
\end{equation}

\paragraph{Stability of properties}
The following lemma is central to our analysis. It asserts that the class of diagonal diffusion processes is globally stable under diffeomorphisms. These transformations preserve the spectral structure (inducing an $\L^2$ isometry) and also transfer the curvature-dimension bounds exactly.

\begin{lem}\label{lem:ddd}
Under the assumptions of this section, consider the following linear mapping and its inverse
\[\Phi : f \lms f\circ A^{n} \and \Phi^{-1} : f \lms f\circ A^{-n}.\]
\begin{itemnum}{\rom}
	\item generator and semigroup:
	\[L = \Phi^{-1}\circ L_A \circ \Phi \and P_t = \Phi^{-1}\circ P_t^A \circ \Phi.\]
	\item invariant measure: if $Y$ is a gradient diffusion de potentiel $V$, then $X$ admits an invariant measure given by
	\[\d \pi_X \,=\, \exp \parent{- V\circ A^n(x) + \sum_i a(x_i)} \d x.\]
	\item spectral resolution: $L_A$ and $L$ share the same spectral resolution since for $\d \pi_Y := e^{-V(x)} \d x$
	\[\Phi : \L^2(\pi_Y) \lra \L^2(\pi_X) \textm{is an isometry}.\]
	\item curvature-dimension: we have $\Gamma^A \=  \Phi\circ \Gamma \circ \Phi^{-1}$ and $\Gamma_2^A \=  \Phi\circ \Gamma_2 \circ \Phi^{-1}$, so that
	\[X \textm{satisfies } \C\D(\rho, m) \qquad\Llra\qquad Y \textm{satisfies } \C\D(\rho, m).\]
\end{itemnum}
\end{lem}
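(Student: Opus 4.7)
My plan is to establish items (i)--(iv) sequentially, relying on a single chain-rule computation that propagates through every structural object attached to the two diffusions. The guiding observation is that the metric $g_{ii}(x) = a(x_i)^2$ in \eqref{eq:mdc} is engineered precisely so that $A^n$ becomes a Riemannian isometry onto the Euclidean model; once this is internalized, all four items reduce to bookkeeping exercises.

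For \textbf{(i)}, I would compute $L_A(\Phi g) = L_A(g \circ A^n)$ by the chain rule. Since $\partial_i(g \circ A^n)(x) = a(x_i)(\partial_i g)(A^n(x))$ and $\partial^2_{ii}(g \circ A^n)(x) = a(x_i)^2 (\partial^2_{ii} g)(A^n(x)) + a'(x_i)(\partial_i g)(A^n(x))$, substituting into $L_A = \sum_i a(x_i)^{-2} \partial^2_{ii} + \sum_i b^i \partial_i$ collapses the factors $a(x_i)^2$ and produces $\sum_i (\partial^2_{ii} g)(A^n(x)) + \sum_i \parent{a(x_i) b^i(x) + a'(x_i)/a(x_i)^2}(\partial_i g)(A^n(x))$, which coincides term by term with $(Lg)(A^n(x)) = \Phi(Lg)(x)$ after comparison with \eqref{eq:ddol}. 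The semigroup intertwining then follows either by functional calculus from $L_A \Phi = \Phi L$, or more transparently from the probabilistic identity $P_t^A(\Phi g)(x) = \EE_x[g(A^n(X_t))] = \EE_{A^n(x)}[g(Y_t)] = (\Phi P_t g)(x)$, using $Y_t = A^n(X_t)$.

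Items \textbf{(ii)} and \textbf{(iii)} reduce to a Jacobian change of variables. The diffeomorphism $A^n$ has diagonal Jacobian with entries $a(x_i)$, so $\d y = \prod_i a(x_i)\, \d x$. Pulling back an invariant density $e^{-V}\, \d y$ for $Y$ to the $X$-variable yields the density $\exp(-V \circ A^n(x))\prod_i a(x_i)$, which is item (ii) (interpreting the statement's $+\sum_i a(x_i)$ as $+\sum_i \log a(x_i)$, the correct Jacobian correction). The same change of variables gives $\int |\Phi g|^2 \, \d\pi_X = \int |g|^2 \, \d\pi_Y$, so $\Phi \colon \L^2(\pi_Y) \to \L^2(\pi_X)$ is an isometry. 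Combined with (i), eigenfunctions of $L$ map bijectively to eigenfunctions of $L_A$ with identical eigenvalues, which is (iii).

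For \textbf{(iv)}, the same chain rule gives $\Gamma_A(\Phi g)(x) = \sum_i a(x_i)^{-2} \cdot a(x_i)^2 (\partial_i g)^2(A^n(x)) = \Phi(\Gamma g)(x)$, confirming $\Gamma_A = \Phi \circ \Gamma \circ \Phi^{-1}$; this is exactly where the cancellation between the metric weight and the squared Jacobian is engineered. Polarizing and using (i), each piece of $\Gamma_{2,A}(f) = \tfrac{1}{2}[L_A \Gamma_A(f) - 2\Gamma_A(f, L_A f)]$ commutes with $\Phi$, yielding $\Gamma_{2,A} = \Phi \circ \Gamma_2 \circ \Phi^{-1}$. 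Since $\Phi$ is a bijection preserving pointwise inequalities, the $\C\D(\rho,m)$ inequality transfers termwise between $(X, L_A)$ and $(Y, L)$. The only real bookkeeping hurdle lies in (i), where the Itô-type correction $a'(x_i)$ produced by the chain rule must match exactly the drift $a'/a^2$ coded into \eqref{eq:ddol}; once this cross-check is done, the remaining items are purely algebraic consequences of $\Phi$ being a measure- and differential-structure-preserving intertwiner.
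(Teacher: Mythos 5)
Your proposal is correct, and for items (i)--(iii) it follows essentially the same path as the paper: chain rule for the intertwining $L_A\circ\Phi=\Phi\circ L$, the probabilistic identity for the semigroup, and the change-of-variables/pushforward argument for the invariant measure and the $\L^2$ isometry. You are in fact more careful than the paper on (ii): the paper's proof simply asserts $\mu=e^{-V}\d x$ without carrying out the pushforward, whereas you correctly produce the Jacobian factor $\prod_i a(x_i)$ and note that the exponent $\sum_i a(x_i)$ in the statement should read $\sum_i \log a(x_i)$. Where you genuinely diverge is item (iv). The paper computes $\Gamma_2$ through the Bochner-type decomposition $\Gamma_2^g(f)=\Ric_g(\nabla_g f,\nabla_g f)+\norme{\Hess_g f}{HS}^2+\Hess_g(V)(\nabla_g f,\nabla_g f)$, checks that $\Ric_g=0$ for the diagonal product metric, and verifies term by term that the Hessian entries transform correctly under $A^{-1}$. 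You instead invoke the algebraic definition $\Gamma_2=\tfrac12\bigl(L\Gamma-2\Gamma(\cdot,L\cdot)\bigr)$ and observe that, since $\Phi$ is an algebra homomorphism conjugating both $L$ and $\Gamma$, the conjugacy of $\Gamma_2$ is automatic; the pointwise $\C\D(\rho,m)$ inequality then transfers because $\Phi$ preserves squares and pointwise order. Your route is shorter, avoids any Riemannian computation (in particular the flatness of the metric, which the paper must check), and generalizes beyond the diagonal setting; the paper's route has the compensating virtue of making the geometric content of the transformation explicit, which is used later when the Hessian of the potential is computed for the Euclidean Dyson--Jacobi process. Both arguments are sound.
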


\begin{proof}
Generator and semigroup. The infinitesimal generators $L_A$ and $L$ satisfy
\[L(f)(y) = L_A(f\circ A^{n})(A^{-n}(y)).\]
The semigroup $(P_t)$ is given for every bounded measurable $f:\mathbb{R}^n\to \mathbb{R}$,
\[P_t(f)(y)= \EE(f(Y_t)\mid Y_0=y)= \EE(f(A^n(X_t))\mid \varphi(X_0)=y)= P_t^n(f\circ A^n)(A^{-n}(y)).\]

Invariant measure. If $Y$ admits an invariant measure $\mu$, then $\mu\circ\varphi^{-1}$ is invariant for $X$. But since $Y$ is a gradient diffusion de potentiel $V$, a invariant measure is given by
\[\mu = e^{- V(x)} \d x.\]

Isometry and spectrum. One can view $G$ as an operator on $L^2(\pi_X)$ and the map $\Phi$ is an isometry from $L^2(\pi_Y)$ to $L^2(\pi_X)$ since $\pi_Y = \pi_X\circ A^{-n}$ and \[\int (\Phi f)^2 \, \d \pi_X \= \int (f\circ A^n)^2 \, \d\pi_X \= \int f^2 \, \d\pi_Y.\]
Thus $L_A$ and $L$ share the same spectral resolution.

Curvature-dimension. Thanks to conjugacy, $X$ and $Y$ satisfy the same curvature-dimension inequalities.
The carré du champ $\Gamma$ and $\Gamma_2$ operators transform via the change of variables formulas
\[\Gamma^g(f) \= \sum_{i} g_{ii} ((\nabla_g f)^i)^2, \quad \Gamma_2^g(f) \= \Ric_g(\nabla_g f, \nabla_g f) + \norme{\Hess_g (f)}{HS}^2 + \Hess_g(\V)(\nabla_g f, \nabla_g f).\]
In this setting, one observes that
\begin{eqnarray*}
\partial_i (f\circ A^{-1})(y_i) &\=& \frac{(\partial_i f)(x_i)}{a(x_i)} \= a(x_i) (\nabla_g f)^i(x_i),\\
\partial_{ii}^2 (f\circ A^{-1})(y_i) &\=& \frac{(\partial_{ii}^2 f)(x_i)}{a(x_i)^2} - \frac{a'(x_i)}{a(x_i)^3} (\partial_i f)(x_i) \= \frac{(\Hess_g f)_{ii}(x_i)}{a(x_i)^2}.\\
\partial_{ij}^2 (f\circ A^{-1})(y_i) &\=& \frac{(\partial_{ij}^2 f)(x_i)}{a(x_i) a(x_j)} \= \frac{(\Hess_g f)_{ij}(x_i)}{a(x_i) a(x_j)}.
\end{eqnarray*}
For the le carré du champ operator, one has
\[\Gamma^g f (x) \= \sum_{i} \parent{a(x_i)(\nabla_g f)^i(x_i)}^2 \= \sum_{i} \parent{\partial_i (f\circ A^{-1})(y_i)}^2 \=  \Gamma(f\circ A^{-n})(y).\]
Concerning the Gamma-two operator, we first note that in the diagonal case $\Ric_g(\nabla_g f, \nabla_g f) = 0$.
Moreover,
\[\norme{\Hess_g (f)}{HS}^2 \= \sum_{i, j} g^{ii} g^{j j} (\Hess_g f)_{ij}^2 \= \sum_{i, j} \frac{(\Hess_g f)_{ij}^2}{a(x_i)^2 a(x_j)^2} \= \sum_{i, j} (\partial_{ij}^2 (f\circ A^{-1}))^2 \= \norme{\Hess (f\circ A^{-1})}{HS}^2.\]
which yields
\begin{eqnarray*}
\Hess_g(\V)(\nabla_g f, \nabla_g f) &\=& \sum_{i,j} (\Hess_g V)_{ij} (\nabla_g^i f) (\nabla_g^j f)\\
	&\=& \sum_{i,j} (\Hess_g V)_{ij} \frac{\partial_i (f\circ A^{-1})(y_i)}{a(x_i)} \frac{\partial_j (f\circ A^{-1})(y_i)}{a(x_j)}\\
	&\=& \sum_{i,j} (\Hess \, V)_{ij} \partial_i (f\circ A^{-1}) \partial_j (f\circ A^{-1})\\
	&\=& (\nabla (f\circ A^{-1}))^\top \Hess(V)\,(\nabla (f\circ A^{-1})).
\end{eqnarray*}
Thus $\Gamma_2^g(f)(x) \= \Gamma_2(f\circ A^{-n})(y)$.
\end{proof}

\paragraph{Distances and divergences}
We now consider comparisons and properties of standard distances and divergences between probability measures, namely the total variation distance, the Kullback--Leibler divergence, the $\L^2$ distance, and the Wasserstein distance. Recall that the Wasserstein distance considered here is the \textit{intrinsic Wasserstein distance} of order $r\geq0$, defined for probability measures $\mu$ and $\nu$ on the same space by
\begin{equation}
\Wasserstein_{g,r}^A(\mu, \nu) \;:=\; 2\parent{\inf_{( X, Y)} \EE\intervalle{\parent{\sum_{i=1}^n \vabs{A(X_i)-A(Y_i)}^2}^{r/2}}}^{1/r},
\end{equation}
where the infimum runs over all couplings $(X, Y)$ of $\mu$ and $\nu$.

\begin{lem}
For $\dist \in \{\T\V, \Kullback, \L^2\}$ we have that for probability measures $\mu$ and $\nu$ on the same space
\[\dist \parent{\mu \circ A \mid \nu \circ A} \= \dist \parent{\mu \mid \nu}.\]
For the Wasserstein distance we have
\[\Wasserstein_{r}^A\parent{\mu \mid \nu} \= 2\,\Wasserstein_{r}\parent{\mu \circ A \mid \nu \circ A}.\]
The two processes therefore have the same long-term behavior and equilibrium trend.
\end{lem}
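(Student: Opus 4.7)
The plan is to exploit the fact that $A^n$ is a $C^2$-diffeomorphism of the ambient space onto its image, hence in particular a bimeasurable bijection acting componentwise. Under such a map, all of the four quantities listed reduce to the same expression evaluated on the pushed-forward measures. Each of the four invariance claims therefore becomes a direct application of the change-of-variables formula.

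For the total variation distance, I would start from $\norme{\mu-\nu}{\T\V} = \sup_B \vabs{\mu(B)-\nu(B)}$ with the supremum over Borel sets. Since $B \mapsto (A^n)^{-1}(B)$ is a bijection on the Borel $\sigma$-algebra, the two suprema coincide. For the Kullback--Leibler divergence, assuming $\mu \ll \nu$, the chain rule gives $\frac{\d(\mu\circ A^n)}{\d(\nu\circ A^n)}(y) = \frac{\d\mu}{\d\nu}((A^n)^{-1}(y))$, and then the substitution $y = A^n(x)$ in the defining integral eliminates the diffeomorphism. The $\L^2$ case, interpreted as a $\chi^2$-type distance between densities against a common reference measure, follows from the same change-of-variables computation applied to the density ratio.

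For the Wasserstein statement, I would observe that couplings $(X,Y)$ of $(\mu,\nu)$ are in bijective correspondence with couplings $(A^n(X), A^n(Y))$ of $(\mu\circ A^n, \nu\circ A^n)$, because $A^n$ acts coordinate-wise as a bijection on $\RR^n$. Under this correspondence, the cost functional is preserved exactly, since the cost $\sum_i \vabs{A(X_i)-A(Y_i)}^2$ in the intrinsic definition is precisely the squared Euclidean distance between the pushed coordinates. The factor $2$ appearing in the statement comes entirely from the prefactor $2$ hard-coded in the definition of $\Wasserstein_{g,r}^A$, while the standard intrinsic Wasserstein on the Euclidean side carries no such prefactor.

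There is no genuine obstacle here; the only care needed is notational, to keep track of whether $\mu \circ A$ means $A^n_\#\mu$ or $(A^n)^{-1}_\#\mu$ and to verify that the assumption $\mu \ll \nu$ is preserved under pushforward (which it is, by bijectivity). The last sentence of the lemma, asserting that the long-time behavior and equilibrium trend are the same for the two processes, then follows immediately by combining these identities with Lemma~\ref{lem:ddd}(i), which already identifies the laws of $X_t^n$ and $Y_t^n$ through $\Phi$.
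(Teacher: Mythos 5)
Your proposal is correct and follows essentially the same route as the paper: the paper simply cites the contraction (data-processing) properties of these distances and divergences under the homeomorphism $A$, together with the geometric construction of the intrinsic Wasserstein distance, whereas you spell out the underlying change-of-variables and coupling-bijection arguments explicitly. No gap.
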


\begin{proof}
In the first case, the result follows directly from the contraction properties of the these distances and divergences (see, e.g., \cite[Lemma 1.2]{BCL}), since $A$ is a homeomorphism.

For the Wasserstein distance, its definition is inherently geometric and depends on the Lipschitz properties of functions. The construction developed in \cite[Appendix B]{Cha25} implies that the result follows immediately.
\end{proof}

The intrinsic Riemannian distance is therefore naturally suited to the definition of Wasserstein distances, intrinsic regularization estimates, and intrinsic functional inequalities. However, the previous lemma allows us to exploit the extensive collection of functional inequalities available in the Euclidean setting in order to derive classical functional inequalities even for intrinsic distances.
Indeed, in the Euclidean framework, these distances and divergences are related through several functional inequalities: the total variation distance and the Kullback--Leibler divergence are connected via the Pinsker's inequality and its converse, as in \cite{Sal25}; and the Kullback--Leibler divergence can be bounded in terms of the $\L^2$ norm, as shown in \cite{BCL}.

\section{Proofs}\label{sec:proofs}



In this section, we establish cutoff for the intrinsic Wasserstein distance.
We first extract the properties arising from the Riemannian viewpoint of the process. This includes, in particular, the spectral properties of the generator established in \cite{Lasb}. We then flatten the process and study the operators associated with the resulting Euclidean diffusion. In particular, we establish a curvature-dimension inequality for this Euclidean process. By the results of the previous section \ref{sec:ddd}, the same inequality then holds for the original Riemannian process.

As a consequence, the results of \cite[Appendix B]{Cha25} apply. We obtain a lower bound for the intrinsic Wasserstein distance by combining the regularization inequality from \cite[Lemma B.2]{Cha25} with Pinsker's inequality, and by using the lower bound established in \cite[Lemma 2.2]{SC94} for the total variation distance. For the upper bound, we rely on the exponential decay of the intrinsic Wasserstein distance obtained in \cite[Lemma B.4]{Cha25}.

\subsection{Useful properties from the Dyson--Jacobi process}\label{ss:updj}

Several key properties of the Dyson--Jacobi process follow directly from its infinitesimal generator, which enjoys a remarkable spectral structure. Access to explicit eigenfunctions is a major advantage in the analysis of cutoff phenomena. Since both the drift and diffusion coefficients are polynomial functions of the coordinates, it is natural to expect the eigenfunctions to be polynomial as well. Moreover, as the generator is self-adjoint and its eigenfunctions can be chosen orthogonal, one is led naturally to families of orthogonal polynomials. This spectral analysis was carried out by Lassalle \cite{Lasb} (see also \cite{BF97}), who treats the case of general $\beta$ and shows that the eigenfunctions are multivariate Jacobi polynomials. 
Let $\L_{\text{sym}}^2((0,1)^n,\pi_{\beta}^{n})$ denote the Hilbert space of symmetric functions in \( n \) variables \( (x_1, \dots, x_n) \) on \( (0,1)^n \), square-integrable with respect to the invariant measure \( \pi_{\beta}^{n} \).

Using the identity $x_i(1-x_j) + x_j(1-x_i) = 2x_i(1-x_i) + (2 x_i- 1) (x_i - x_j)$, inside the double sum, the infinitesimal generator of the Dyson--Jacobi process can be written as
\begin{equation}\label{eq:IG}
\G_{\D\J}(f) = \sum_{i=1}^{n} x_i(1-x_i)(\partial_{ii}^2 f) + \sum_{i=1}^n \parent{b_n- \frac{\beta}{2}(n-1) + \lambda_n x_i + \beta \sum_{j \neq i} \frac{x_i (1-x_i)}{x_i - x_j}}(\partial_i f),
\end{equation}
where $\lambda_n := a_n+b_n$ denotes the spectral gap.
The operator $\G_{\D\J}$ is self-adjoint on $\L_{\text{sym}}^2(\RR_+^n,\pi_{\beta}^{n})$, acts as a second-order differential operator, and preserves the space of symmetric polynomials. Its spectrum is completely described in \cite{Lasb}: all eigenspaces are finite-dimensional, and the eigenfunctions are given by the generalized Jacobi polynomials. The eigenspace associated with the eigenvalue $-\lambda_n$ is one-dimensional and generated by an explicit affine eigenfunction
\begin{equation}\label{eq:efsp}
\varphi_{n}(x) \;:=\; \phi_n(x) - \frac{n b_n}{b_n + a_n} \qquad\mtext{where}\qquad \phi_n(x) \;:=\; x_1 + \cdots + x_n.
\end{equation}
We emphasize that while the underlying stochastic process is referred to as the \textit{Dyson--Jacobi} process, its infinitesimal generator is called the \textit{Jacobi} operator.

Thanks to the simple form of this eigenfunction, its moments and norms can be computed explicitly. First, since the diffusion coefficient is non-constant, the carré du champ operator is no longer given by the squared Euclidean gradient, but instead takes the deformed form
\begin{equation}\label{eq:cdc}
\Gamma^{\D\J} f \= \sum_{i=1}^n x_i (1-x_i) (\partial_{i} f)^2.
\end{equation}
Since $\nabla \varphi_{n} = (1, \ldots, 1)$, the expectation of this eigenfunction with respect to $\pi_{\beta}^{n}$ is given by
\begin{equation}\label{eq:espvp}
\EE_{\pi_{\beta}^{n}}(\varphi_n) \= - \EE_{\pi_{\beta}^{n}}(1 \times \G\varphi_{n}) \=  \EE_{\pi_{\beta}^{n}}(\Gamma^{\D\J}(1,\varphi_{n})) \= 0.
\end{equation}
Moreover, computing its $\L^2(\pi_{\beta}^{n})$ norm by integration by parts yields
\[\norme{\varphi_{n}}{2}^2 \= - \ps{\G_{\D\J}\varphi_{n}, \varphi_{n}}_{2} \= \int \sum_i x_i (\partial_i \varphi_{n})^2 \, \d \pi_{\beta}^{n} \= \EE_{\pi_{\beta}^{n}}(\varphi_n) + \frac{n b_n}{b_n + a_n} \= \frac{n b_n}{b_n + a_n}.\]
These explicit computations play a central role in the study of cutoff in $\L^p$ distances, as discussed for instance in Section \ref{ss:krc}.

Finally, we note that establishing curvature-dimension properties directly in the intrinsic Riemannian framework is delicate in this setting. A context in which the computations become more transparent is obtained by flattening the diffusion via a suitable change of variables. This approach is developed in the following section.

\subsection{Useful properties of the Euclidean--Dyson--Jacobi process} \label{sec:epov}

We begin by describing the Euclidean--Dyson--Jacobi process.
Using the construction of Section \ref{sec:ddd}, one can eliminate the non-constant diffusion coefficient and work entirely within a Euclidean framework. For the Dyson--Jacobi process \eqref{eq:DJ}, this is achieved through the change of variables
\[Y_{t}^{i,n} \;:=\; 2 \arcsin \sqrt{ X_{t}^{i,n}},\]
which leads to what is referred to as the \textit{Euclidean--Dyson--Jacobi process} (EDJ).
After this transformation, the resulting stochastic differential equation takes the form
\begin{equation}\label{eq:EDJ}
\d Y_{t}^{n} \= \sqrt{2}\, \d B_t - \nabla V(Y_t^{n}) \, \d t, \qquad Y_0^{i,n} = 2 \arcsin \sqrt{ X_{0}^{i,n}},
\end{equation}
where the potential function is given explicitly in \cite[Equation (2)]{Dem10} by
\[y \mapsto -\sum_{i=1}^n 2 (b_n-a_n) \ln \vabs{\sin y_i/2} \,+ C_a \ln \vabs{\sin y_i} +\, 2\beta \sum_{i > j} \ln\vabs{\sin\parent{\frac{y_i-y_j}{2}}} + \ln\vabs{\sin\parent{\frac{y_i+y_j}{2}}}.\]
Up to an additive constant, which plays no role in the dynamics, we may assume without loss of generality that
\begin{equation}
V(y) \;:=\; -\sum_{i=1}^n C_b \ln \vabs{\sin \frac{y_i}{2}} \,+\, C_a \ln \vabs{\cos \frac{y_i}{2}} +\, 2 \beta \sum_{i > j} \ln\vabs{\sin\parent{\frac{y_i-y_j}{2}}} + \ln\vabs{\sin\parent{\frac{y_i+y_j}{2}}},
\end{equation}
with $C_a = 2 a_n - \beta (n-1)-1$ and $C_b = 2 b_n - \beta (n-1) -1$. Using \eqref{eq:exconv}, the potential $V$ is thus strictly convex, of class $C^2$.


The unique invariant probability measure is the Gibbs measure $\d P_\beta^n := e^{-V(y)} \d y$, namely
\begin{equation}\d P_\beta^n \= \frac{\1_{(y_1, \cdots, y_n) \in \bar{D}_n}}{C_\beta^n} \prod_{i=1}^n \vabs{\sin \frac{y_i}{2}}^{C_b} \vabs{\cos \frac{y_i}{2}}^{C_a} \prod_{j < i} 
\vabs{\sin\parent{\frac{y_i-y_j}{2}} \sin\parent{\frac{y_i+y_j}{2}}}^{2\beta} \d y_i.
\end{equation}
This measure is reversible for the EDJ dynamics, and the diffeomorphic transformation therefore preserves the log-concavity of the invariant distribution.

The associated infinitesimal generator is the linear differential operator
\begin{equation}
\G_{\E\D\L} \= \Delta - \nabla V \cdot \nabla,
\end{equation}
acting on smooth functions. Combining the results of Section \ref{sec:ddd} and Section \ref{ss:updj}, we obtain a complete spectral description of this operator. In particular, the eigenfunction associated with the spectral gap $\lambda_n \= b_n + a_n$ is given by
\[\widetilde{\varphi}_n(y) \= \sum_{i=1}^n \sin^2(y_i/2) + \frac{n b_n}{b_n + a_n}.\]

The key advantage of this flattening procedure lies in the curvature-dimension structure, and consequently in the analysis of cutoff. In the Euclidean setting, the carré du champ operator takes its classical form
\begin{equation}\label{eq:gammaEDJ}
\Gamma^{\E\D\J}(f) \= \vabs{\nabla f}^2 \= \sum_{i=1}^{n} (\partial_{i} f)^2.
\end{equation}
For the computation of the Gamma-two operator, recall that in the Euclidean framework it is given, in terms of the potential, by $\Gamma_{2}^{\E\D\L}(f) \= \norme{\Hess f}{2}^2 + (\nabla f)^\top (\Hess \, \V) (\nabla f)$.
The entries of the Hessian of the potential are explicitly given by
\[\Hess(V)_{ii} \= \frac{C_b}{4 \sin(y_i)^2} + \frac{C_a}{4 \cos(y_i)^2} + \frac{\beta}{2} \sum_{k\neq i} \frac{1}{\sin(y_i+y_k)^2} + \frac{1}{\sin(y_i-y_k)^2},\]
and
\[\Hess(V)_{ij} \= \frac{\beta}{2} \parent{\frac{1}{\sin(y_i+y_j)^2} + \frac{1}{\sin(y_i-y_j)^2}}.\]
Consequently, the Gamma-two operator can be written as
\begin{align}\label{eq:gamma2EDJ}
\Gamma_{2}^{\E\D\J}(f) &\= \norme{\Hess f}{2}^2 + \frac{1}{4} \sum_{i} \parent{\frac{C_b}{\sin(y_i)^2} + \frac{C_a}{\cos(y_i)^2}}(\partial_i f)^2\\
	& \qquad + \; \frac{\beta}{2} \sum_{i, k < i} \parent{\frac{1}{\sin(y_i+y_j)^2} + \frac{1}{\sin(y_i-y_j)^2}} \parent{(\partial_i f) + (\partial_j f)}^2,
\end{align}
after symmetrizing the double sums using
\begin{align*}
	& \frac{\beta}{2} \sum_{i, j\neq i} \parent{\frac{1}{\sin(y_i+y_j)^2} + \frac{1}{\sin(y_i-y_j)^2}} [(\partial_i f)^2 + (\partial_i f)(\partial_j f)]\\
	& \qquad \= \frac{\beta}{2} \sum_{i, j < i} \parent{\frac{1}{\sin(y_i+y_j)^2} + \frac{1}{\sin(y_i-y_j)^2}} [(\partial_i f)^2 + (\partial_j f)^2 + 2 (\partial_i f)(\partial_j f)].
\end{align*}
Using the expressions \eqref{eq:gammaEDJ} and \eqref{eq:gammaEDJ}, together with the bounds $\frac{1}{\sin(y_i)^2}, \frac{1}{\cos(y_i)^2} \geq 1$, we obtain
\[\Gamma_{2}^{\E\D\J}(f) \geq \frac{C_a + C_b}{4} \Gamma(f),\]
and hence conclude that Euclidean--Dyson--Jacobi process satisfies the curvature-dimension condition $\C\D\parent{\frac{1}{4}\parent{C_a + C_b},\infty}$. In particular, applying the results of Section \ref{sec:ddd}, we obtain the following
\begin{lem}[Curvature-dimension]\label{lem:cd}
The Dyson--Jacobi process \eqref{eq:DJ} satisfies the curvature-dimension condition $\C\D\parent{\frac{1}{2}\parent{a_n  + b_n - \beta (n-1)-1},\infty}$.
\end{lem}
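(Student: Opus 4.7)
The plan is to exploit the flattening developed in Section \ref{sec:ddd}. By Lemma \ref{lem:ddd}\,(iv), the curvature-dimension condition is invariant under the diffeomorphism $A^{n}(x)_i = 2\arcsin\sqrt{x_i}$, so it suffices to establish $\C\D(\rho,\infty)$ with $\rho = \tfrac{1}{2}(a_n+b_n-\beta(n-1)-1)$ for the Euclidean--Dyson--Jacobi process \eqref{eq:EDJ} with potential $V$. Since $\rho = (C_a+C_b)/4$, the claim is equivalent to the bound $\Gamma_2^{\E\D\J}(f) \geq \frac{C_a+C_b}{4}\,\Gamma^{\E\D\J}(f)$ announced after equation \eqref{eq:gamma2EDJ}.

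First, I would rely on the fact that for a Langevin diffusion with smooth potential $V$ in Euclidean space, Bochner's identity reads
\[
\Gamma_2(f) \= \norme{\Hess f}{HS}^2 + (\nabla f)^\top \Hess(V) (\nabla f),
\]
so the first term is nonnegative and can be dropped, and the only task is to bound the quadratic form $(\nabla f)^\top \Hess(V) (\nabla f)$ from below by $\tfrac{C_a+C_b}{4}|\nabla f|^2$. I would plug in the explicit Hessian entries already computed in the excerpt, then perform the symmetrization of the double sum over $i\neq j$ that is displayed right after \eqref{eq:gamma2EDJ}; this packages the off-diagonal contributions into the nonnegative squares $\bigl((\partial_i f)\pm(\partial_j f)\bigr)^2$, which can therefore simply be discarded.

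Once the interaction terms are removed, what remains is a purely diagonal quadratic form, bounded below by
\[
\frac{1}{4}\sum_{i=1}^n \parent{\frac{C_b}{\sin^2(y_i)} + \frac{C_a}{\cos^2(y_i)}}(\partial_i f)^2.
\]
Here I would use the elementary inequalities $1/\sin^2(y_i) \geq 1$ and $1/\cos^2(y_i) \geq 1$, valid on the support of $P_\beta^n$ (where $y_i \in (0,\pi/2)$ after the change of variable $y_i = 2\arcsin\sqrt{x_i}$ with $x_i \in (0,1)$), combined with the condition \eqref{eq:exconv} which guarantees $C_a, C_b > 0$ so that both coefficients are honest lower bounds. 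This gives $\sum_i((\partial_i f)^2)\cdot\tfrac{C_a+C_b}{4} = \tfrac{C_a+C_b}{4}\,\Gamma^{\E\D\J}(f)$, establishing $\C\D(\rho,\infty)$ for the EDJ process.

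Pulling the inequality back through Lemma \ref{lem:ddd}\,(iv) yields $\C\D(\rho,\infty)$ for the original Dyson--Jacobi process with
\[
\rho \= \frac{C_a + C_b}{4} \= \frac{1}{2}\bigl(a_n + b_n - \beta(n-1) - 1\bigr),
\]
which is exactly the claimed curvature bound. The only non-routine step is the symmetrization of the repulsive Coulomb-type cross-terms; once that is done, the positivity of the off-diagonal contribution is manifest and everything else reduces to the trigonometric inequalities $\csc^2 \geq 1$ and $\sec^2 \geq 1$.
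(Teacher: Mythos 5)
Your proposal is correct and follows essentially the same route as the paper: flatten via $y_i = 2\arcsin\sqrt{x_i}$, apply Bochner's identity for the resulting Langevin diffusion, symmetrize the Coulomb cross-terms into nonnegative squares, bound the diagonal part using $\csc^2,\sec^2 \geq 1$ together with $C_a, C_b > 0$, and transfer the bound back through Lemma \ref{lem:ddd}. The only cosmetic slip is that the change of variable sends $x_i\in(0,1)$ to $y_i\in(0,\pi)$ (so that $y_i/2\in(0,\pi/2)$), not $y_i\in(0,\pi/2)$; this does not affect any inequality used.
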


Finally, we note that the non-interacting case $(\beta=0)$ was already treated in \cite[Section 9]{SC94} and \cite[Section 2.7.4]{BGL14}. In this way, we recover instrinsically a full set of functional inequalities. In particular, under the product condition, the cutoff phenomenon in the total variation established in \cite{Sal25} applies in the present setting.

\subsection{Proof of Theorem \ref{thm:CDJ}}

Recall that the spectral gap is given by $\lambda_n := a_n + b_n$. The associated eigenfunction $\varphi_n$ is identified in Section \ref{ss:updj}. We also combine Lemma \ref{lem:cd} with the fact that, under assumptions \eqref{eq:exconv}, we have the following curvature bound
\begin{equation}\label{eq:csp}
\fa n \geq 1, \qquad 2 \rho_n \;:=\; a_n + b_n - \beta (n-1) - 1 \> 1 .
\end{equation}

\paragraph{Lowerbound}
From Pinsker's inequality and the regularization inequality for the intrinsic Wasserstein distance obtained in \cite[Lemma B.2]{Cha25}, we get, for each $\eta >0$
\[\norme{\mu_{t+\eta}^{x} - \pi_\beta^n}{\T\V}^2 \leq 2 \cdot \Kullback(\mu_{t+\eta}^{x} \mid \pi_\beta^n) \leq \frac{1}{2 \eta} \cdot \Wasserstein^2(\mu_t^x, \pi_{\beta}^{n}).\]
We then control this lower bound using \cite[Lemma 2.2]{SC94},
\begin{equation}\label{eq:lbtv}
\norme{\mu_t^x - \pi_\beta^n}{\T\V} \geq 1 - 4 \frac{\norme{\varphi_n}{2}^2}{\vabs{\varphi_n(x)}^2} e^{2 \lambda_n t} - 4 \frac{\norme{\varphi_n}{2}^4}{\vabs{\varphi_n(x)}^4} \Var_{\mu_t^x}\parent{\frac{\varphi_n(x) \varphi_n(\cdot)}{\norme{\varphi_n}{2}^2} } e^{2 \lambda_n t}.
\end{equation}
By Duhamel's formula (see, for instance, \cite[Equation (3.1.21)]{BGL14}) and the identity $\P_t \varphi_n = e^{- \lambda_n t} \varphi_n$, we obtain
\[\Var_{\mu_t^x}(\varphi_n) \= \P_t(\varphi_n^2)(x) - (\P_t \varphi_n(x))^2 \= 2 \int_0^t \P_s (\Gamma (\P_{t-s}\varphi_n))(x) \d s 
\leq \frac{n}{2 \lambda_n},\]
since
\[\P_s (\Gamma (\P_{t-s}\varphi_n))(x) \= e^{-\lambda_n(t-s)} \P_s \Gamma \varphi_n \= e^{-\lambda_n(t-s)} \P_s \parent{\sum_{i=1}^n x_i(1-x_i)} \leq \frac{n}{4} e^{-\lambda_n(t-s)}.\]
Therefore, since $b_n \geq 1$, it follows that
\[\norme{\mu_t^x - \pi_\beta^n}{\T\V} \geq 
1 - 6 \, \frac{n b_n}{\lambda_n \vabs{\varphi_n(x)}^2} \, e^{2 \lambda_n t}.\]

\paragraph{Upperbound}
Denoting $\arcsin\sqrt{x_0^n} := \parent{\arcsin\sqrt{x_0^{n,i}}}_{1\leq i \leq n}$, we estimate
\[\Wasserstein_{}^2(\delta_{x^n_0}, \pi_{\beta}^{n}) \= \int\vabs{\arcsin\sqrt{x^n_0} - \arcsin\sqrt{x}}^2 \d \pi_{\beta}^{n}(x).\]
Since $\vabs{a-b}^2 \leq (\vabs{a}^2 + \vabs{b}^2)$, and since $\arcsin \sqrt{t} \leq \frac{\pi}{2} \sqrt{t}$ for all $0\leq t \leq 1$, we derive
\[\vabs{\arcsin\sqrt{x^n_0} - \arcsin\sqrt{x}}^2 \leq \parent{\vabs{\arcsin\sqrt{x^n_0}}^2 + \vabs{\arcsin\sqrt{x}}^2}
\leq \frac{\pi^2}{4} \parent{\phi_n(x^n_0) + \phi_n(x)}.\]
Using $b_n \leq \lambda_n$, together with $\EE_{\pi_{\beta}^{n}}(\phi_n)  \= \frac{n b_n}{\lambda_n}$, it then follows
\begin{eqnarray*}
\Wasserstein^2(\Law(X_t^n), \pi_{\beta}^{n}) & \= & e^{- 2 \rho_n t} \cdot \Wasserstein^2(\Law( X_0^n), \pi_{\beta}^{n})\\
	&\leq& e^{-2 \rho_n t} \cdot \frac{\pi^2}{4} \parent{\phi_n(x^n_0) + \EE_{\pi_{\beta}^{n}}(\phi_n)}\\
	&\leq& \frac{\pi^2}{4} e^{-2 \rho_n t} \parent{\phi_n(x^n_0) + \frac{n b_n}{\lambda_n}}.
\end{eqnarray*}

\subsection{Proof of Corollary \ref{cor:CDJ}}
This corollary allows us to establish cutoff in several regimes indexed by the ratio $r\in[0,+\infty]$. We proceed by revisiting the previously derived lower bounds $c_n^{-}$ and upper bounds $c_n^{+}$. We then study the ratio $c_n^{-}/c_n^{+}$, keeping in mind that for all $n\geq 1$ we have $c_n^{-} \leq c_n \leq c_n^{+}$.

\paragraph{Case $r=+\infty$}
This assumption implies the following asymptotic relations
\[a_n \gg b_n, \qquad a_n \gg \frac{\beta}{2} n, \qquad \frac{\rho_n}{\lambda_n} \xra{}{n\to\infty} \frac{1}{2}.\]
In this regime, the only meaningful situation occurs when $\log(a_n/b_n) \lesssim \log n$. In particular, if the ratio $a_n / b_n$ grows faster than any polynomial, the lower bound becomes negligible compared to the upper bound. More precisely, we have $c_n^{-} \ll c_n^{+}$, and the resulting estimates of the mixing time $c_n$ is very poor.

\paragraph{Case $r=0$}
We first note that one must assume $\bar{x}_0 \neq 1$. Otherwise, the lower bound given by equation \eqref{eq:lbtv} does not allow us to conclude, and we only obtain an upper bound of order $\frac{\log n}{\lambda_n}$. We therefore assume $\bar{x}_0 \neq 1$. Under the assumption $r=0$, we further have
\[a_n \ll b_n, \qquad b_n \gg \frac{\beta}{2} n, \qquad \lambda_n \sim b_n, \qquad \frac{\rho_n}{\lambda_n} \xra{}{n\to\infty} \frac{1}{2}.\]

\paragraph{Case $0 \< r \< +\infty$}
The same preliminary remark applies as in the previous case. We must assume $\bar{x}_0 \neq m$ for the same reason. Otherwise, we only obtain an upper bound of order $\frac{\log n}{2 \rho_n}$. As a consequence of the assumption $0 \< r \< 1$, we have
\[a_n \sim r b_n, \qquad \lambda_n \sim (r+1)b_n, \qquad 1-\frac{2\rho_n}{\lambda_n} \sim \frac{\beta n}{\lambda_n} \sim \frac{\beta}{r+1} \frac{n}{b_n}.\]
The key quantity in this regime is the ratio $\frac{\rho_n}{\lambda_n}$. Its behavior is determined by the growth of the coefficient $b_n$. 

If $ \rho_n \ll \lambda_n$, then the estimate of the mixing time is poor, since $c_n^{-} \ll c_n^{+}$.

Otherwise, if $\rho_n \sim \alpha \lambda_n$ for some $0 \< \alpha \< \frac{1}{2}$, the conclusion of the corollary follows.

\bibliography{bibliographie_jacobi}
\bibliographystyle{abbrv}

\end{document}